\newtheorem*{MTh}{Main Theorem}
\newtheorem{Lem}{Lemma}
\newtheorem{Prop}{Proposition}
\newtheorem*{Cor*}{Corollary}
\theoremstyle{definition}
\newtheorem{Def}{Definition}
\theoremstyle{remark}
\newtheorem{Rm}{Remark}
\newtheorem{Ex}{Example}
\newcommand{\Z}{\mathbb{Z}}
\newcommand{\ring}{\Bbbk} % Fixed commutative ring
\mathchardef\mhyphen="2D % math hyphen -
\DeclareMathOperator{\Nat}{Nat} % Complex of natural operations
\DeclareMathOperator{\Ob}{Ob} % Objects of a category
\DeclareMathOperator{\Map}{Map} % mapping space
\DeclareMathOperator{\hc}{hc} % homotopy coherent natural transformations
\newcommand{\op}{\mathrm{op}} % opposite category
\DeclareMathOperator{\cyc}{cyc}
\newcommand{\Category}[1]{\textsf{\textbf{#1}}} % Use this style for all categories
\newcommand{\Ch}{\Category{Ch}} % Chain complexes
\newcommand{\ford}{\Delta} % Finite ordinals
\newcommand{\cyclic}{\Lambda} % The cyclic category
\newcommand{\ChHom}{\underline{\Ch}}
\newcommand{\SeqOp}{\mathcal{S}} % The sequence operad
\newcommand{\loops}{L}
\begin{document}
\title{Free loop space and the cyclic bar construction}
\author{Massimiliano Ungheretti}
\address{\newline Department of Mathematical Sciences\newline University of Copenhagen\newline
Universitetsparken 5\newline 2100 Copenhagen, Denmark}
\email{m.ungheretti@math.ku.dk}
\date{Uploaded February 29, 2016. Revised August 2016}
\urladdr{\url{www.ungheretti.com}}

\subjclass[2010]{55P50, 55P35 (primary), 16E40, 19D55, 55S20 (secondary)}

% AMS 2010 Subject Classification
% Primary
% 55P50 String Topology
% 55P35 Loop spaces
% Secondary
% 16E40 (Co)homology of rings and algebras (e.g. Hochschild, cyclic, dihedral, etc.)
% 19D55 K-Theory and homology; cyclic homology and cohomology
% 55S20 Secondary and higher cohomology operations

\begin{abstract}Using the $E_\infty\mhyphen$structure on singular cochains, we construct a homotopy coherent map from the cyclic bar construction of the differential graded algebra of cochains on a space to a model for the cochains on its free loop space. This fills a gap in the paper ``Cyclic homology and equivariant homology'' by John~D.S.~Jones.\end{abstract}

\maketitle

\section{Introduction}
Hochschild homology has been widely used to provide an algebraic model for the cohomology of free loop spaces. In particular, there is an isomorphism $HH_*(S^*(X))\cong H^*(\loops X)$ for the singular cochain algebra $S^*(X)$ of a simply connected space $X$. This was proved by John~D.S.~Jones in \cite{Jones}, together with its $SO(2)\mhyphen$equivariant version.

One step in the proof of this isomorphism requires one to establish the equivalence of two diagrams of chain complexes, $B^{\cyc}_\bullet S^*(X)$ and $S^*(\Map (S^1_{\bullet}, X))$. The first is the cyclic bar construction of cochains on a space $X$ and the second is given by the cochains on $\Map (S^1_{\bullet}, X)$, a cocyclic space modelling the free loop space of $X$. Jones uses the Alexander--Whitney map to compare these two cyclic objects, which gives a map on every simplicial level; however, it does not form a map of cyclic objects, as it does not commute with the structure maps of the cyclic category. This fact can already be seen in simplicial level one, where the Alexander--Whitney map should be symmetric on the cochain level in order to commute with the cyclic operator $t$ and the first boundary map $d_1$. This cannot be the case, as it would imply that the cup product is commutative on the cochain level. The cup product is, however, commutative up to coherent homotopy and it is this natural $E_\infty\mhyphen$structure that will be used to construct a homotopy coherent isomorphism instead, filling the gap in the proof.

\begin{MTh}Let $X$ be a space with finite type homology over a principal ideal domain $\ring$. There is a natural zigzag of equivalences of cyclic chain complexes \[B^{\cyc}_\bullet S^*(X;\ring) \xleftarrow{\simeq} QB^{\cyc}_\bullet S^*(X;\ring) \xrightarrow{\simeq} S^*(\Map (S^1_{\bullet}, X);\ring),\]
where $QB^{\cyc}_\bullet S^*(X;\ring)$ is a resolution of the cyclic bar construction.\end{MTh}
\begin{Rm}The finiteness assumption is not needed when working with chains rather than cochains: the cyclic cobar construction of the coalgebra of chains $\Omega^{\cyc}_\bullet S_*(X)$ is equivalent to $S_*(\Map (S^1_{\bullet}, X))$. This statement works over the integers and uses the same proof combined with the observation that the $E_\infty\mhyphen$structure on cochains described in \cite{McClureSmithCochains} is the linear dual of an operad coaction on chains.\end{Rm}

If $X$ is simply connected and of finite type over a field $\ring$, Jones' proof implies the isomorphisms
\begin{align*}
H^*(\loops X;\ring) &\cong HH_*(S^*(X;\ring)) \\
H^*(\loops X \times_{\mathrm{SO}(2)} E\mathrm{SO} (2);\ring) &\cong HC_*^-(S^*(X;\ring)).
\end{align*}

The assumption that $X$ is of finite type over a field $\ring$ is not explicitly stated in \cite{Jones}, but it is used in a cited paper: That $\ring$ is a field is assumed in \cite{AndersonEMSS} to establish the ``convergence'' of the cosimplicial mapping space $\Map (S^1_{\bullet}, X)$ over $\ring$. The finite type assumption ensures that the Alexander--Whitney map $S^*(X)\otimes S^*(X) \rightarrow S^*(X\times X)$ is a quasi isomorphism.

In Proposition 5.3 of \cite{AyalaFrancis}, the authors use factorization homology over $S^1$ to show that the assumptions can be weakened to allow $\ring$ to be any commutative ring, $X$ a nilpotent space equivalent to a finite type CW complex and $\pi_1$ finite.

From the algebraic theorem in \cite{JonesMcCleary}, one can reprove both isomorphisms using the same conditions. For this one needs to start with the isomorphism $H_*(\loops X)\cong HH_*(S_*(\Omega X))$ from \cite{Goodwillie} and combine this with Adams' cobar equivalence.

A failure to commute with the last boundary map also appears in the papers \cite{PatchkoriaSagave} and \cite{WahlILSMCG}, where methods similar to ours are used.

\subsection{Acknowledgements}
The author would like to thank John Jones for the useful correspondence, Kristian Moi for discussing Section \ref{sec:hc} and Nathalie Wahl for general guidance. The author was supported by the Danish National Sciences Research Council (DNSRC) and the European Research Council (ERC), as well as by the Danish National Research Foundation (DNRF) through the Centre for Symmetry and Deformation.

\subsection{Conventions}
We use the closed symmetric monoidal structure of $\Ch$, the category of homologically graded chain complexes of abelian groups. The tensor product of two chain complexes carries a differential $d(x\otimes y)=dx\otimes y + (-1)^{|x|} x\otimes dy$. The internal hom is a chain complex $\ChHom(X,Y)$ that in degree $n$ consists of linear maps of degree $n$ and has differential $(d\psi) (x) = d\circ\psi(x) - (-1)^n \psi(dx)$. This means that the chain maps are the 0-cycles in this chain complex. Any cosimplicial object in the category of chain complexes $A^\bullet$ gives a double complex using $\Sigma (-1)^i \delta^i$. Its (product) totalization is written as $A$, omitting the bullet. We use similar notation for simplicial chain complexes $A_\bullet$.

\subsection{Cyclic objects} We briefly recall some definitions of cyclic objects and refer to \cite{Jones,LodayCyclic} for more details.
The morphisms of the category of finite ordered sets $\ford$ are generated by $\delta^i, \sigma^i$, which satisfy the simplicial relations. By appropriately adding cyclic permutations $\langle \tau \rangle = C_{n+1}$ as the automorphisms of $[n]$, one obtains Connes' cyclic category $\cyclic$. Functors out of this category are called (co)cyclic objects.
\begin{Ex}There is a cyclic set $[n]\mapsto S^1_n = \Z/(n+1)\Z$ that realizes to the circle. From this, one obtains for each space $X$ a cocyclic space $[n] \mapsto \Map(S^1_n, X)= X^{n+1}$ which totalizes to the free loop space $\loops X$. The coboundaries are given by the diagonal maps, the codegeneracies by forgetting factors and the cyclic maps by cyclically permuting the factors. For example, \[\delta^{n+1}(x_0,\ldots,x_n)=(x_0,x_1,\ldots,x_n,x_0).\] By functoriality of $S^*(\mhyphen)$, $S^*(\Map(S^1_\bullet,X))$ is a cyclic chain complex.\end{Ex}
\begin{Ex}For any unital differential graded algebra $A$, we have the \emph{cyclic bar complex} $(B^{\cyc}A)[n]=A^{\otimes n+1}$, that can be used to compute Hochschild and cyclic homology. The structure maps are given by multiplication, insertions of the unit and cyclic permutations of the tensor factors.\end{Ex}

\subsection{Homotopy commutative structure of cochains}\label{sec:cochains}
The main ingredient for the proof of our main theorem is the natural $E_\infty\mhyphen$structure on the normalized singular cochains $S^*(X)$. Such operad actions are given, for example, in \cite{BergerFresseCochains,McClureSmithCochains} and are the integral analogue of Sullivan and Quillen models \cite{MandellCochains}. In our proofs, we only use the fact that there exists a symmetric differential graded operad $\SeqOp$ which has the homology of a point in every arity and which comes with a map from the unital associative operad and a map to the natural operations on $S^*(X)$ which specify the cup product and its unit.

\begin{Rm}An inductive argument for the contractibility of an operad $\SeqOp$ is given on p. 689 of \cite{McClureSmithCochains}. However, there is a minor mistake that may be spotted by applying the formula $\partial s + s \partial = \textit{id} + \iota r$ to the example $\langle  3123 \rangle$. To fix this, it is enough to change the map $r$ to only be 0 unless the sequence contains exactly a single 1.\end{Rm}

\section{Homotopy coherent natural transformations}\label{sec:hc}
In this section we adapt the treatment of homotopy coherent natural transformation in \cite[\S8]{DuggerHocolim} from spaces to chain complexes.
\begin{Def}Let $I$ be a small category and $F,G\colon I\rightarrow \Ch$ two diagrams of chain complexes. Define the cosimplicial chain complex of \emph{homotopy coherent natural transformations} $\hc(F,G)^\bullet\colon \ford \rightarrow \Ch$ as \[\hc(F,G)^n=\prod_{\underline{\phi}\in N_nI} \ChHom (F(i_0),G(i_n)),\] where the product runs over simplices of the nerve $N_\bullet I$ of $I$. The structure maps on such families $A\in \hc(F,G)^n$ are given by
\[(\sigma^i A)_{\underline{\phi}} = A_{s_i \underline{\phi}}\]
\[ 
 (\delta^i A)_{\underline{\phi}} = 
  \begin{cases} 
   F(i_0) \xrightarrow{F(i_0\rightarrow i_1)} F(i_1) \xrightarrow{ A_{d_0 {\underline{\phi}}}} G(i_{n+1}) & \text{if } i=0, \\
   F(i_0) \xrightarrow{A_{d_i {\underline{\phi}}}} G(i_{n+1}) & \text{if } 0<i<n+1, \\
   F(i_0) \xrightarrow{A_{d_{n+1} \underline{\phi}} } G(i_n) \xrightarrow{G(i_{n}\rightarrow i_{n+1})} G(i_{n+1}) & \text{if } i=n+1.
  \end{cases}
\]
A single \emph{homotopy coherent natural transformation} is defined to be a $0 \mhyphen$cycle in the totalization $\hc(F,G)$.
\end{Def}

\begin{Ex}Finding single homotopy coherent natural transformation means finding a family of elements $A^n \in \hc(F,G)^n$ of degree $n$ such that $d A^0=0$ and $\Sigma_i (-1)^{i} \delta^i A^n = (-1)^n d A^{n+1}$. These $A^n$ are themselves families indexed by $\underline{\phi}\in N_nI$, which we write as $A_{\underline{\phi}}\in \ChHom(F(i_0),G(i_n))$. For $n=0$ this means that we have $A_i \colon F(i)\rightarrow G(i)$ a chain map of degree zero for each object $i\in I$. In the case when $n=1$, we have for each morphism $\phi\colon i_0\rightarrow i_1$ in $I$ a map $A_\phi \colon F(i_0)\rightarrow G(i_1)$ of degree one. These maps are not required to be chain maps but instead satisfy \[ d_{G(i_1)}\circ A_\phi + A_\phi \circ d_{F(i_0)}=(\delta^0 A)_\phi - (\delta^1 A)_\phi = A_{i_1} \circ F(\phi) - G(\phi)\circ A_{i_0}.\] That is, the maps $A_\phi$ provide chain homotopies implementing the failure of the naturality squares to commute on the nose.
\[ \xymatrix{ F(i_0) \ar[r]^{F(\phi)} \ar[d]_{A_{i_0}} & F(i_1) \ar[d]^{A_{i_1}}\\ G(i_0) \ar[r]^{G(\phi)} & G(i_1) }\]
For $n\geq 1$, we have homotopies relating all the different ways of composing a string of morphisms and the previous homotopies.
\end{Ex}

\begin{Def} For a small diagram $F\colon I\rightarrow \Ch$, we define the resolution $QF_\bullet\colon I\times \ford^{\op}\rightarrow \Ch$ as the two-sided bar construction $QF_\bullet = B_\bullet (I,I,F)$, where the first $I$ is shorthand for the bifunctor $\Z I(\mhyphen,\mhyphen)$. Concretely, this gives a simplicial $I\mhyphen$diagram, which at the object $i\in \Ob I$ in simplicial degree $n$ is a sum over $n\mhyphen$simplices in $N_\bullet(I/i)$.
\[QF_n(i)=\bigoplus_{i_0\rightarrow \ldots \rightarrow i_n \rightarrow i} F(i_0)\]
For a morphism $\alpha\colon i\rightarrow j$, we have a map $QF_\bullet (i)\rightarrow QF_\bullet(j)$ induced by $\alpha_*\colon N_\bullet(I/i)\rightarrow N_\bullet(I/j)$. The simplicial structure maps all act on the indexing sets $N_\bullet(I/i)$, where a composition with $F(i_0\rightarrow i_1)$ is needed in the definition of $d_0$.
\end{Def}

% TODO \pagebreak Still nec?
\begin{Prop}\label{prop:Q}For a small diagram $F\colon I\rightarrow \Ch$, the resolution $QF_\bullet \colon I\times \ford^{\op}\rightarrow \Ch$ has the following properties:
\begin{enumerate}
\item\label{Prop:Q_hc} There is a canonical isomorphism of cosimplicial chain complexes 
\[ \alpha^\bullet\colon \underline{\Nat}_I(QF_\bullet,G) \xrightarrow{\cong} \hc(F,G)^\bullet.\]
\item\label{Prop:Q_augm} There is a natural object-wise quasi isomorphism $QF\xrightarrow{\simeq} F$.
\item\label{Prop:Q_qi} Under the identification of total complexes 
\[\alpha\colon \underline{\Nat}_I(QF,G) \xrightarrow{\cong} \hc(F,G),\]
the quasi isomorphisms on the left hand side correspond on the right hand side to the $0\mhyphen$cycles that in cosimplicial degree 0 give quasi isomorphisms $F(i)\xrightarrow{\simeq}G(i)$.
\end{enumerate}\end{Prop}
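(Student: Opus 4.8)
The plan is to handle the three parts in order: (\ref{Prop:Q_hc}) is a co-Yoneda computation, (\ref{Prop:Q_augm}) is the standard extra-degeneracy contraction, and (\ref{Prop:Q_qi}) — reconciling the two notions of equivalence — is where the real work sits.

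For (\ref{Prop:Q_hc}): in simplicial degree $n$ the diagram $QF_n=\bigoplus_{i_0\to\cdots\to i_n}F(i_0)\otimes\Z I(i_n,-)$ is a direct sum of free $I\mhyphen$diagrams. Applying $\underline{\Nat}_I(-,G)$ turns the sum into a product and, by the $\Ch\mhyphen$enriched Yoneda lemma $\underline{\Nat}_I(\Z I(j,-)\otimes C,G)\cong\ChHom(C,G(j))$, identifies $\underline{\Nat}_I(QF_n,G)$ with $\prod_{\underline{\phi}\in N_nI}\ChHom(F(i_0),G(i_n))=\hc(F,G)^n$. One then checks that the simplicial maps of $QF_\bullet$ dualize to the structure maps of $\hc(F,G)^\bullet$: the reindexing faces and the degeneracies account for the middle cofaces and the codegeneracies, the face $d_0$ (which applies $F(i_0\to i_1)$) dualizes to $\delta^0$, and the last face (which precomposes the representable $\Z I(i_n,-)$ with the arrow into $i_{n+1}$) dualizes to $\delta^{n+1}$. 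This is pure bookkeeping.

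For (\ref{Prop:Q_augm}): the augmentation $\epsilon\colon QF_\bullet\to F$ is, at the object $i$ in simplicial degree $0$, the map $\bigoplus_{i_0\to i}F(i_0)\xrightarrow{\sum F(i_0\to i)}F(i)$. For each fixed $i$ the augmented simplicial chain complex $QF_\bullet(i)\to F(i)$ admits an extra degeneracy appending $\mathrm{id}_i$ to a string $i_0\to\cdots\to i_n\to i$, so $\epsilon(i)$ is a chain homotopy equivalence, in particular a quasi-isomorphism; the extra degeneracy is not natural in $i$, which is why $QF$ is not split off $F$ as an $I\mhyphen$diagram. I also record the homotopy inverse $\sigma(i)\colon F(i)\hookrightarrow QF(i)$, the inclusion of $F(i)$ as the subcomplex of the totalization spanned by the $\mathrm{id}_i\mhyphen$summand of $QF_0(i)$: one has $\epsilon(i)\circ\sigma(i)=\mathrm{id}$ and $\sigma(i)\circ\epsilon(i)\simeq\mathrm{id}$.

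The crux is (\ref{Prop:Q_qi}). Totalizing (\ref{Prop:Q_hc}) — with the product totalization, which is what $\underline{\Nat}_I(-,G)$ produces from the direct sums defining $QF$ — gives $\alpha\colon\underline{\Nat}_I(QF,G)\xrightarrow{\cong}\hc(F,G)$. A $0\mhyphen$cycle on the left is a map of $I\mhyphen$diagrams $\Phi\colon QF\to G$; write $\xi=\alpha(\Phi)$ and let $\xi^0=(\xi_i)_i$ be its cosimplicial degree $0$ part, each $\xi_i\colon F(i)\to G(i)$ a chain map. Unwinding the Yoneda identification of (\ref{Prop:Q_hc}) shows $\xi_i=\Phi(i)\circ\sigma(i)$, and composing the contraction of (\ref{Prop:Q_augm}) with $\Phi(i)$ gives the key homotopy $\Phi(i)\simeq\Phi(i)\circ\sigma(i)\circ\epsilon(i)=\xi_i\circ\epsilon(i)$ of chain maps $QF(i)\to G(i)$. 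Since $\epsilon(i)$ is a quasi-isomorphism, two-out-of-three shows $\Phi(i)$ is a quasi-isomorphism if and only if $\xi_i$ is, which settles both implications at once. This last point is the genuinely nontrivial one: a priori ``$\Phi$ a quasi-isomorphism'' concerns the whole totalization rather than its degree $0$ component $\xi^0$, and it is the contraction of (\ref{Prop:Q_augm}) that collapses the former to the latter. The remaining effort is careful bookkeeping — the Koszul signs in the double-complex totalization defining $\alpha$, and the verification that the Yoneda identification sends the weight $0$ component of $\Phi$ to $(\Phi(i)\circ\sigma(i))_i$ — routine, but the natural place for a sign error to hide.
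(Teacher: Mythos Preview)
Your proof is correct and follows the same approach as the paper: part~(\ref{Prop:Q_hc}) via Yoneda is the paper's naturality check rephrased, part~(\ref{Prop:Q_augm}) is identical, and part~(\ref{Prop:Q_qi}) is the paper's argument made more explicit---where the paper simply says the homological behaviour of $\Phi(i)$ is read off in cosimplicial degree $0$ via the augmentation $H_*(F)\cong H_*(QF)\to H_*(G)$, you unpack this by introducing the section $\sigma(i)$ and the homotopy $\Phi(i)\simeq\xi_i\circ\epsilon(i)$.
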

\begin{proof}\leavevmode
\begin{enumerate}
\item On cosimplicial level $n$, the left hand side is a subset
\[\underline{\Nat}_I(QF_n,G)\subset \prod_{i\in \Ob I} \ChHom( \hspace{-1em} \bigoplus_{i_0\rightarrow \ldots \rightarrow i_n\rightarrow i}\hspace{-1em} F(i_0),G(i))= \hspace{-1em} \prod_{i_0\rightarrow \ldots \rightarrow i_n \rightarrow i}\hspace{-1em} \ChHom (F(i_0),G(i)).\]
It is exactly the subset determined by a naturality condition, comparing $\alpha_*\colon N_\bullet (I/i)\rightarrow N_\bullet (I/j)$ with $G(\alpha)$ for morphisms $\alpha\colon i\rightarrow j$. One sees that this amounts exactly to the data being determined by the simplices of the form $i_0\rightarrow \ldots \rightarrow i_n \xrightarrow{id} i_n$. To obtain the value at $i_0\rightarrow \ldots \rightarrow i_n \rightarrow i$, post-compose by $G(i_n\rightarrow i)$.

It remains to compare the cosimplicial structure maps. The $\delta^j$ for $j\neq 0,n$ are clear as they only affect the index. Since $d_0$ used $F(i_0\rightarrow i_1)$, we see that $\delta^0$ precomposes by this map. For the last coboundary, we need to use the naturality to see the postcomposition by $G(i_{n-1}\rightarrow i_n)$.
\item There is a standard augmentation $QF\rightarrow F$ defined as
\[\sum F(i_0\rightarrow i) \colon \bigoplus_{i_0\rightarrow \ldots \rightarrow i_n \rightarrow i} F(i_0)\rightarrow F(i),\]
with contracting homotopy $s_{n+1}$ given by appending the identity at the end of the indexing simplex.
\item First, observe that the chain maps are the $0\mhyphen$cycles. The behaviour of a map $QF(i)\rightarrow G(i)$ in homology is determined by what it does in cosimplicial degree 0. This can be seen by inspecting the augmentation and $H_*(F)\cong H_*(QF)\rightarrow H_*(G)$.
\end{enumerate}
\end{proof}

\section{Comparing the cyclic chain complexes}
In this section we construct a homotopy coherent natural transformation for $I=\cyclic^{\op}, F=B^{\cyc}_\bullet S^*(X),G = S^*(\Map (S^1_{\bullet}, X))\colon \cyclic^{\op}\rightarrow \Ch$.
\begin{Prop}\label{prop:hc_exists}There exists a homotopy coherent natural transformation $A$ from $B^{\cyc}_\bullet S^*(X)$ to $S^*(\Map (S^1_{\bullet}, X))$ that at each object $i$ is given by the Alexander--Whitney maps
\[B^{\cyc} S^*(X)(i)= S^*(X)^{\otimes i+1}\rightarrow S^*(X^{i+1})=S^*(\Map (S^1_{\bullet}, X))(i)\]
and is natural in $X$.\end{Prop}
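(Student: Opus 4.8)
The plan is to construct $A$ by induction over the cosimplicial degree of $\hc(F,G)^\bullet$, equivalently over the simplicial degree of the nerve of $I=\cyclic^{\op}$; by Proposition~\ref{prop:Q} this is the same as constructing a map of $\cyclic^{\op}$-diagrams $QF\to G$, natural in $X$, that extends the iterated Alexander--Whitney maps in simplicial degree $0$. All maps built below are natural transformations of functors of $X$, so naturality in $X$ is automatic. Recall from the Example after the definition of $\hc$ that a homotopy coherent natural transformation is a family $A^n\in\hc(F,G)^n$ of degree $n$ with $dA^0=0$ and $dA^{n+1}=(-1)^n\delta A^n$, where $\delta=\sum_i(-1)^i\delta^i$; moreover $A^n=(A_{\underline\phi})_{\underline\phi}$ is indexed by the nerve, with $A_{\underline\phi}$ a natural degree-$n$ map $F(i_0)=S^*(X)^{\otimes i_0+1}\to S^*(X^{i_n+1})=G(i_n)$. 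For fixed $\underline\phi$ these maps form a chain complex $C(\underline\phi)$ of natural transformations, and the construction amounts to solving the cocycle equations inside the $C(\underline\phi)$ by induction on $n$.

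For the base case, take $A^0_{[n]}$ to be the iterated Alexander--Whitney map; this is a natural chain map, so $dA^0=0$. The obstruction to finding $A^1$ is the degree-$0$ element $\delta A^0$, whose component at a morphism $\phi$ of $\cyclic^{\op}$ is the failure $G(\phi)\circ A^0_{i_0}-A^0_{i_1}\circ F(\phi)$ of the corresponding naturality square to commute --- a natural chain map. Since the structure maps of $G$ are induced by diagonals, projections and cyclic permutations of $X^{\bullet+1}$, while those of $F$ are built from the cup product, its unit and permutations of tensor factors, this natural chain map induces zero in cohomology: this is exactly the graded commutativity and associativity of the cup product together with the compatibility of the Alexander--Whitney map with the K\"unneth map, and it uses no finiteness hypothesis. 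Granting the acyclic-models input described in the last paragraph, a natural chain map between these functors that is zero in cohomology is naturally chain-homotopic to zero, so one may choose $A^1$ with $dA^1=\delta A^0$.

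For the inductive step, suppose $A^0,\dots,A^n$ have been chosen with $dA^j=(-1)^{j-1}\delta A^{j-1}$ for $1\le j\le n$. The next obstruction $\delta A^n$ has degree $n$ and is a $d$-cycle, since $d(\delta A^n)=\delta(dA^n)=(-1)^{n-1}\delta^2A^{n-1}=0$ by the cosimplicial relation $\delta^2=0$. When $n\ge 1$ this is a cycle of positive degree, hence a $d$-boundary as soon as each $C(\underline\phi)$ has vanishing homology in positive degrees; choosing preimages $A^{n+1}_{\underline\phi}$ --- compatibly with the $\cyclic^{\op}$-diagram constraints that $\delta A^n$ already satisfies --- produces $A^{n+1}$ and completes the induction. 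The resulting $0$-cycle $A=(A^n)$ in $\hc(F,G)$ is natural in $X$ and restricts at each object to the Alexander--Whitney maps, as required.

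The one substantial ingredient, and the main obstacle, is the homotopical input invoked twice above: for all $m,n$ the complex of natural transformations of functors of $X$ from $S^*(X)^{\otimes m}$ to $S^*(X^n)$ has homology concentrated in degree $0$, where it is detected by the induced natural operation $H^*(X)^{\otimes m}\to H^*(X^n)$. This is exactly where the $E_\infty$-structure of Section~\ref{sec:cochains} is essential: the operad $\SeqOp$ --- contractible in every arity, receiving a map from the associative operad and mapping to the natural operations on $S^*(X)$ --- governs the natural operations between tensor powers of $S^*(X)$, and the homotopies between them, through the contractible complexes $\SeqOp(k)$, so that the required acyclicity follows in the style of the acyclic-models proof of the Eilenberg--Zilber theorem. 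This is the statement that the cup product is \emph{coherently} homotopy commutative, and supplying it is precisely what is missing from Jones' comparison of the two cyclic objects.
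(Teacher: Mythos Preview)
Your inductive skeleton is the same as the paper's, but there is a genuine gap at the point where you invoke acyclicity. You claim that for each $\underline\phi$ the chain complex $C(\underline\phi)$ of \emph{all} natural transformations $S^*(X)^{\otimes i_0+1}\to S^*(X^{i_n+1})$ has homology concentrated in degree~$0$, and you justify this by saying that the contractible operad $\SeqOp$ ``governs'' these operations. But $\SeqOp$ only \emph{maps} to the natural operations on $S^*(X)$; nothing in Section~\ref{sec:cochains} says this map is a quasi-isomorphism, and an acyclic-models argument on the cochain side (as opposed to chains) is not available without further work. So the contractibility of $\SeqOp(k)$ does not, by itself, imply that your obstruction cycle $\delta A^n$ bounds inside $C(\underline\phi)$.

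The paper avoids this problem by never leaving $\SeqOp$. The key idea you are missing is a factorisation: one demands inductively that every component be of the form $A_{\underline\phi}=S_{\underline\phi}\circ\pi_\phi$, where $\pi_\phi\colon S^*(X)^{\otimes i_0+1}\to S^*(X^{i_n+1})^{\otimes i_0+1}$ is the explicit map built from the projections $\pi_k\circ\phi$, and $S_{\underline\phi}\in\SeqOp(i_0+1)_m$. The nontrivial content is then Lemma~\ref{lem:cyclic_interaction}, which shows that the coface maps $\delta^0$ and $\delta^{n}$ (the only ones that do not merely re-index) preserve this factored form: one gets $(\delta^0A)_{\underline\phi}=(S_{d_0\underline\phi}\circ\phi_{1*})\circ\pi_\phi$ and $(\delta^nA)_{\underline\phi}=S_{d_n\underline\phi}\circ\pi_\phi$, where $\phi_{1*}$ is itself an operadic operation (a composite of cup products, unit insertions and permutations). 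Hence the entire obstruction equation lives in $\SeqOp(i_0+1)$, and its right-hand side is a cycle there; contractibility of $\SeqOp$ then yields $S_{\underline\phi}$ and therefore $A_{\underline\phi}$. Without this factorisation and the lemma that makes it stable under the coface maps, your inductive step does not go through.
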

Before giving the proof of this proposition, we introduce some notation and prove a lemma. For any $\underline{\phi}=(i_0 \xrightarrow{\phi_1} i_1 \ldots \xrightarrow{\phi_m} i_m)\in N_m\cyclic^{\op}$, we write $\phi$ for the composition $\phi_m\circ \ldots \circ \phi_1$. Associated to $\phi\in \cyclic^{\op}([i],[j])$ are three structure maps of (co)cyclic objects:
\begin{align*}
&\phi_* \colon S^*(X)^{\otimes i+1}\rightarrow S^*(X)^{\otimes j+1}\\
&\phi \colon X^{j+1} \rightarrow X^{i+1}\\
&\phi^* \colon S^*(X^{i+1})\rightarrow S^*(X^{j+1}).
\end{align*}
Using the projections $\pi_k\colon X^{i+1}\rightarrow X, k=0,\ldots, i$, we moreover associate to each $\phi\in \cyclic^{\op}([i],[j])$ a map 
\[\pi_\phi=(\pi_0\circ \phi)^*\otimes \ldots \otimes (\pi_i \circ \phi)^*\colon S^*(X)^{\otimes i+1}\rightarrow S^*(X^{j+1})^{\otimes i+1}.\]

\begin{Lem}\label{lem:cyclic_interaction} The following square commutes for any $\phi \in \cyclic^{\op}([i],[j])$.
\[\xymatrix{
	S^*(X)^{\otimes i+1} \ar[r]^{\phi_*} \ar[d]_{\pi_\phi} & S^*(X)^{\otimes j+1} \ar[d]_{\pi_{id}} \\
	S^*(X^{j+1})^{\otimes i+1} \ar[r]^{\phi_*} & S^*(X^{j+1})^{\otimes j+1}
}\]
\end{Lem}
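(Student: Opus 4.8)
The strategy is to unwind all the maps in the square to the level of singular simplices and reduce the commutativity to a bookkeeping statement about how the cyclic operator acts on the indexing set. Let me set up notation: recall that $\phi \in \cyclic^{\op}([i],[j])$ corresponds to a map of finite (cyclic) sets, hence to a map $\phi\colon X^{j+1}\to X^{i+1}$ on mapping spaces which is determined by its components $\pi_k\circ\phi\colon X^{j+1}\to X$ for $k=0,\ldots,i$; each such component is a projection $\pi_{\phi(k)}$ for a suitable index $\phi(k)\in\{0,\ldots,j\}$ (using that the relevant structure maps of the cocyclic space $\Map(S^1_\bullet,X)$ are built from diagonals, projections, and permutations, all of which have this form). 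Correspondingly $\phi_*$ on $S^*(X)^{\otimes i+1}$ is, on the appropriate model, an iterated Alexander--Whitney-type assembly; but for this lemma one does not need its precise form, only naturality.

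First I would observe that the map $\pi_\phi$ factors as $\pi_\phi = (\pi_{\phi(0)})^*\otimes\cdots\otimes(\pi_{\phi(i)})^*$, i.e. on the $k$-th tensor factor it is the map $S^*(X)\to S^*(X^{j+1})$ induced by the projection $X^{j+1}\to X$ onto coordinate $\phi(k)$. Likewise $\pi_{id}\colon S^*(X)^{\otimes j+1}\to S^*(X^{j+1})^{\otimes j+1}$ applies $(\pi_\ell)^*$ to the $\ell$-th factor. The bottom horizontal $\phi_*\colon S^*(X^{j+1})^{\otimes i+1}\to S^*(X^{j+1})^{\otimes j+1}$ is the structure map of the cyclic bar construction on the algebra $S^*(X^{j+1})$ associated to the \emph{same} morphism $\phi\in\cyclic^{\op}$, whereas the top $\phi_*$ is the structure map for $S^*(X)$. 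The key point is that both are instances of $B^{\cyc}_\bullet$ applied to a morphism in $\cyclic^{\op}$, and $B^{\cyc}_\bullet(-)$ is a functor of the algebra: the collection of maps $(\pi_\ell)^*\colon S^*(X)\to S^*(X^{j+1})$ does not assemble into a single algebra map (that is exactly the non-commutativity problem driving the whole paper), so one cannot simply invoke functoriality of $B^{\cyc}$ in the algebra. Instead the square is a statement about the $\cyclic$-structure maps being built out of the symmetric monoidal operations (tensor, permutation) together with insertions of units and \emph{diagonals/multiplications} — and the one place multiplication enters, namely the $d_0$ face and (on the dual side) the cyclic operator, is precisely where one must check compatibility.

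So the plan reduces to checking the square on generators of $\cyclic^{\op}$: the cofaces $\delta^k$, codegeneracies $\sigma^k$, and the cyclic operator $\tau$. For $\sigma^k$ (insertion of a unit into a tensor factor, on the $B^{\cyc}$ side) and for the inner cofaces, both horizontal maps act by permuting/inserting/duplicating tensor factors in a way that only relabels which coordinate projection sits where, and then commutativity with the vertical projection-pullbacks is immediate from naturality of $S^*(-)$ in each coordinate — this is the ``clear'' case analogous to the $\delta^j$, $j\neq 0,n$, remark in the proof of Proposition~\ref{prop:Q}. The substantive case is the one boundary/cyclic map where multiplication in the algebra appears: there $\phi_*$ on the top row multiplies two adjacent tensor factors $S^*(X)\otimes S^*(X)\to S^*(X)$ via cup product (equivalently Alexander--Whitney), and one must verify that pulling back first along $(\pi_a)^*$ and $(\pi_b)^*$ and then multiplying inside $S^*(X^{j+1})$ (bottom row) agrees with multiplying first in $S^*(X)$ and then pulling back along the relevant coordinate projection (right column) — this holds because the cup product on $S^*$ is a natural transformation of functors on spaces, so it commutes with the pullback along any map, in particular along the projection $X^{j+1}\to X$, provided both coordinates $a,b$ are sent to the \emph{same} target coordinate, which is exactly the combinatorics of how $d_0$ (resp.\ $\tau$) acts on the cyclic set $S^1_\bullet$. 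I expect this last point — matching up the indices $\phi(k)$ coming from the cyclic combinatorics so that the two tensor factors being multiplied on the top are pulled back to the two factors being multiplied on the bottom — to be the only real obstacle, and it is a finite check on the generators of $\cyclic$; everything else is formal naturality of the cup product and its unit.
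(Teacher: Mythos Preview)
Your approach is the same as the paper's: reduce to the generating faces, degeneracies, and cyclic operators of $\cyclic^{\op}$ and observe that the only nontrivial case is the one where $\phi_*$ involves the cup product, which then follows from naturality of the cup product together with the fact that the two tensor factors being multiplied are pulled back along the \emph{same} projection. Your analysis of that case is correct.

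One point you pass over that the paper makes explicit: the reduction to generators is not automatic, because the left vertical map $\pi_\phi$ depends on $\phi$, so the squares for $\psi$ and for $\phi$ do not simply stack. The paper supplies the missing step by showing that if the square commutes for $\psi\in\cyclic^{\op}([i],[j])$ and for $\phi\in\cyclic^{\op}([j],[k])$ then it commutes for the composite, using the identities $\pi_\phi=(\phi^*)^{\otimes i+1}\circ\pi_{id}$ and $\pi_{\psi\circ\phi}=(\phi^*)^{\otimes j+1}\circ\pi_\psi$ together with naturality of $B^{\cyc}$ in the algebra map $\phi^*$. You should include this closure-under-composition argument; once it is in place your generator checks finish the proof exactly as the paper does.
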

\begin{proof}
This is an elementary check for the boundaries, degeneracies and cyclic operators, which together generate all morphisms in $\cyclic^{\op}$. If we have two composable morphisms $\psi\in \cyclic^{\op}([i],[j]), \phi\in \cyclic^{\op}([j],[k])$ that both satisfy the condition, then their composition satisfies the condition.

\[\xymatrix{
	S^*(X)^{\otimes i+1} \ar[d]_{\pi_{\psi}} \ar[r]^{\psi_*} & S^*(X)^{\otimes j+1} \ar[d]_{\pi_{id}} \ar[r]^{\phi_*} & S^*(X)^{\otimes k+1} \ar[dd]_{\pi_{id}}\\
	S^*(X^{j+1})^{\otimes i+1} \ar[d]_{(\phi^*)^{\otimes i+1}} \ar[r]^{\psi_*} & S^*(X^{j+1})^{\otimes j+1} \ar[d]_{(\phi^*)^{\otimes j+1}} & \\
	S^*(X^{k+1})^{\otimes i+1} \ar[r]^{\psi_*} & S^*(X^{k+1})^{\otimes j+1} \ar[r]^{\phi_*} & S^*(X^{k+1})^{\otimes k+1}
}\]
Note that $\pi_\phi= (\phi^*)^{\otimes i+1} \circ \pi_{id}$ and $\pi_{\psi \circ \phi}= (\phi^*)^{\otimes j+1} \circ \pi_\psi$. The top left and right hand squares commute by assumption on $\psi$ and $\phi$ respectively. The bottom left square commutes by naturality of the cyclic bar construction with respect to the map $\phi^*$ of differential graded algebras.
\end{proof}

\begin{proof}[Proof of Proposition~\ref{prop:hc_exists}] Fix a contractible operad $\SeqOp$ acting naturally on $S^*(X)$ as described in Section~\ref{sec:cochains}. We will prove the existence of the homotopy coherent map $A$ by induction on the cosimplicial degree $m$. For $m=0$ we have the Alexander--Whitney maps, which are given as \[ A_i^0\colon S^*(X)^{\otimes i+1} \xrightarrow{\pi_{id}} S^*(X^{i+1})^{\otimes i+1} \xrightarrow{S_i} S^*(X^{i+1}).\]
Here $S_i$ is the $i+1$ fold cup product, which lives in the operad as $S_i\in\SeqOp (i+1)_0$. The fact that this map can be factored as such will be the essential idea of the proof. Assume that we have defined $A^m$ for $m < n$ satisfying the boundary condition $\sum (-1)^j (\delta^j A^{m-1})_{\underline{\phi}}=(-1)^m d A^m_{\underline{\phi}}$. Assume furthermore that the components are of the form $A_{\underline{\phi}}=S_{\underline{\phi}} \circ \pi_{\phi}$ for $\underline{\phi}\in N_m \cyclic^{\op}$ with composition $\phi$ and the $S_{\underline{\phi}} \in \SeqOp(i_0+1)_m$ satisfying 
\begin{equation}\label{eq:S_condition} \tag{$\star$} (-1)^m dS_{\underline{\phi}}= S_{d_0 \underline{\phi}} \circ \phi_{1*} + \sum_{j=1}^m (-1)^j S_{d_j \underline{\phi}}. \end{equation}

To show that we can extend this construction to level $n$, we need to find $A_{\underline{\phi}}$ of this form for all $\underline{\phi} \in N_n \cyclic^{\op}$ in such a way that the boundary condition holds and $S_{\underline{\phi}}$ satisfies (\ref{eq:S_condition}). To do this, we describe the cosimplicial differential of $\hc(B^{\cyc}S^*(X),S^*(\Map (S^1_{\bullet}, X)))$ to see that (\ref{eq:S_condition}) implies the boundary condition.

The first coboundary can be written as $(\delta^0A)_{\underline{\phi}}=A_{d_0 \underline{\phi}}\circ \phi_{1*}=S_{d_0\underline{\phi}}\circ \pi_{d_0\phi} \circ \phi_{1*}= S_{d_0 \underline{\phi}} \circ \phi_{1*} \circ \pi_\phi$. The last equality can be seen using the commuting diagram
\[\xymatrixcolsep{4pc}\xymatrix{
	S^*(X)^{\otimes{i_0+1}} \ar[r]^{\pi_{\phi_1}} \ar[d]_{\phi_{1 *}} & S^*(X^{i_1+1})^{\otimes{i_0+1}} \ar[r]^{(\hat{\phi}^*)^{\otimes i_0+1}} \ar[d]_{\phi_{1*}} & S^*(X^{i_m+1})^{\otimes{i_0+1}} \ar[d]_{\phi_{1*}}\\
	S^*(X)^{\otimes{i_1+1}} \ar[r]^{\pi_{id}} & S^*(X^{i_1+1})^{\otimes{i_1+1}} \ar[r]^{(\hat{\phi}^*)^{\otimes i_1+1}} & S^*(X^{i_m+1})^{\otimes{i_m+1}},
}\]
where $\hat{\phi}=\phi_m\circ \ldots \circ \phi_2$ is associated to $d_0\underline{\phi}$. The first square commutes by Lemma~\ref{lem:cyclic_interaction} and the second by naturality of the cyclic bar construction.

The last coboundary can be factored as 
\[(\delta^m A)_{ \underline{\phi} } = {\phi_m}^* \circ A_{{d_m} {\underline{\phi}}} = {{\phi}_m}^* \circ S_{d_m {\underline{\phi}}} \circ \pi_{\tilde{\phi}} = S_{d_m {\underline{\phi}}} \circ {\pi}_{\phi}.\] 
Associated to the $d_m {\underline{\phi}}$ is the composition $\tilde{\phi} = \phi_{m-1} \circ \ldots \circ \phi_1$ and the last equality is a consequence of the commutativity of the diagram.
\[\xymatrixcolsep{5pc}\xymatrix{
	S^*(X)^{\otimes{i_0+1}} \ar[r]^{\pi_{\tilde{\phi}}} \ar[rd]_{\pi_\phi} & S^*(X^{i_{m-1}+1})^{\otimes{i_0+1}} \ar[r]^{S_{d_m \underline{\phi}}} \ar[d]^{({\phi_m}^*)^{\otimes i_0+1}} & S^*(X^{i_{m-1}+1}) \ar[d]^{{\phi_m}^*}\\
							& S^*(X^{i_{m}+1})^{\otimes{i_0+1}} \ar[r]^{S_{d_m \underline{\phi}}} & S^*(X^{i_{m}+1})}\]
The square commutes by the naturality of the operation $S_{d_m \underline{\phi}}$.

All the intermediate coboundaries $(\delta^j A)_{\underline{\phi}}$ for $j\neq 0,m$ are already of the form $(\delta^jA)_{\underline{\phi}}= A_{d_j\underline{\phi}} = S_{d_j \underline{\phi}} \circ \pi_{\phi}$. Observe that $\pi_{d_j\underline{\phi}}=\pi_\phi$.

This shows that (\ref{eq:S_condition}) implies the boundary condition. Also, the expression (\ref{eq:S_condition}) lives entirely inside $\SeqOp$ since $\phi_*$ is a composition of cup products, insertions of identities and permutations of arguments. One can see that the right hand side is in fact a cycle in $\SeqOp$ by applying the differential termwise and using the inductive hypotheses. This produces terms like $S_{d_0 d_0 \underline{\phi}}\circ \phi_{2 *} \circ \phi_{1 *}$ and $S_{d_l d_j \underline{\phi}}$ which all cancel out by the simplicial identities. As $\SeqOp$ has the homology of a point in every arity, this implies that such $S_{\underline{\phi}}$ exist.
\end{proof}

\begin{proof}[Proof of Main Theorem] The augmentation of Proposition \ref{prop:Q}.\ref{Prop:Q_augm} provides the first quasi isomorphism $QB^{\cyc}_\bullet S^*(X;\ring) \xrightarrow{\simeq} B^{\cyc}_\bullet S^*(X;\ring)$. The second map $QB^{\cyc}_\bullet S^*(X;\ring) \rightarrow S^*(\Map (S^1_{\bullet}, X);\ring)$ is provided by Propositions~\ref{prop:Q}.\ref{Prop:Q_hc} and \ref{prop:hc_exists}. The finiteness assumptions imply that the Alexander--Whitney maps are quasi isomorphisms, meaning we can apply Propositions~\ref{prop:Q}.\ref{Prop:Q_qi} to see that the map is a quasi isomorphism.\end{proof}
\bibliographystyle{alpha}
\bibliography{hochschild}

\end{document}